\DeclareMathAlphabet{\mathpzc}{OT1}{pzc}{m}{it} 
\newtheorem{Th}{Theorem}[section]              
\newtheorem{Cor}[Th]{Corollary}
\newtheorem{Prop}[Th]{Proposition}
\newtheorem{Lem}[Th]{Lemma}
\newcommand{\G}{\Gamma}
\newcommand{\LLL}{\mathbb{L}}
\newcommand{\W}{\mathbb{W}}
\DeclareMathOperator{\spann}{span}
\title[$L^p$-boundedness for Bessel and Laguerre maximal operators]
      {$L^p$-boundedness properties for the maximal operators for semigroups associated with Bessel and Laguerre operators}
\author[J.J. Betancor]{J.J. Betancor}
\author[A.J. Castro]{A.J. Castro}
\author[P.L. De Nápoli]{P.L. De Nápoli}
\author[J.C. Fariña]{J.C. Fariña}
\author[L. Rodríguez-Mesa]{L. Rodríguez-Mesa}
\address{\newline
        Jorge J. Betancor, Alejandro J. Castro, Juan C. Fariña and Lourdes Rodríguez-Mesa \newline
        Departamento de Análisis Matemático,
        Universidad de la Laguna, \newline
        Campus de Anchieta, Avda. Astrofísico Francisco Sánchez, s/n, \newline
        38271, La Laguna (Sta. Cruz de Tenerife), Spain}
\email{jbetanco@ull.es, ajcastro@ull.es, jcfarina@ull.es, lrguez@ull.es}
\address{\newline
        Pablo L. De Nápoli \newline
        Departamento de Matem\'atica \newline
        Universidad de Buenos Aires, \newline
        e Instituto de Investigaciones Matem\'aticas \newline
        ``Luis A. Santal\'o'', CONICET \newline
        1248 Pabell\'on 1, Ciudad Universitaria, Buenos Aires, Argentina}
\email{pdenapo@dm.uba.ar}
\keywords{Maximal operators, Bessel, Laguerre, heat semigroups of operators}
\subjclass[2010]{42B25, 44A20, 43A50}
\thanks{The authors are partially supported by MTM2010/17974.
        The second author is also supported by a FPU grant from the Government of Spain.
        The third author is also partially supported by Conicet (Argentina) under PIP
        1420090100230, by ANPCYT under PICT 01307 and by UBACyT research proyect: 20020090 100067.}
\begin{document}


  \maketitle                

  \begin{abstract}
    In this paper we prove that the generalized (in the sense of Caffarelli and Calderón \cite{CC1})
    maximal operators associated with heat semigroups for Bessel and Laguerre operators are weak type
    $(1,1)$. Our results include other known ones and our proofs are simpler than the ones for the
    known special cases.
  \end{abstract}

    \section{Introduction} \label{sec:intro}

    Stein investigated in \cite{Ste1} harmonic analysis associated to diffusion semigroups of operators.
    If $\{T_t\}_{t>0}$ is a diffusion semigroup in the measure space $(\Omega,\mu)$, in
    \cite[p. 73]{Ste1} it was proved that the maximal operator $T_*$ defined by
    $$T_* f = \sup_{t>0} |T_t f|$$
    is bounded from $L^p(\Omega,\mu)$ into itself, for every $1<p<\infty$. As far as we know there is not a
    result showing the behavior of $T_*$ on $L^1(\Omega,\mu)$ for every diffusion semigroup $\{T_t\}_{t>0}$.
    The behavior of $T_*$ on $L^1(\Omega,\mu)$ must be established by
    taking into account the intrinsic properties of $\{T_t\}_{t>0}$. The usual result says that
    $T_*$ is bounded from $L^1(\Omega,\mu)$ into $L^{1,\infty}(\Omega,\mu)$, but not bounded from
    $L^1(\Omega,\mu)$ into $L^1(\Omega,\mu)$. In order to analyze $T_*$ in $L^1(\Omega,\mu)$ in many cases
    this maximal operator is controlled by a Hardy-Littlewood type maximal operator, and also, the vector
    valued Calderón-Zygmund theory (\cite{RRT}) can be used. These procedures have been employed to study
    the maximal operators associated to the classical heat semigroup \cite[p. 57]{Ste2},
    to Hermite operators (\cite{Mu}, \cite{Sj} and \cite{ST1}), to Laguerre operators
    (\cite{MST1}, \cite{MST2}, \cite{Mu}, \cite{NS1} and \cite{Stem2}), to Bessel operators
    (\cite{BCC}, \cite{BCN}, \cite{BHNV}, \cite{MS} and \cite{Stem0}) and to Jacobi operators (\cite{MS} and \cite{NS2}),
    amongst others.\\

    Our objective in this paper is to study the $L^p$-boundedness properties, $1 \leq p \leq \infty$, for the
    generalized (in the sense of Caffarelli and Calderón \cite{CC1}) maximal operators associated to the multidimensional
    Bessel and Laguerre operators.\\

    Our results (see Theorems below) extend the others known for the Bessel operators
    (\cite[Theorem 2.1]{BHNV}
    and \cite[Theorem 1.1]{BCC})
    and for the Laguerre operators (\cite[Theorem 1.1]{NS1}). Moreover, by exploiting  ideas developed by
    Caffarelli and Calderón (\cite{CC1} and \cite{CC2}) we are able to prove our result in a much simpler
    way than the one followed in \cite{BCC}, \cite{BHNV} and \cite{NS1}.\\

    We now recall some definitions and properties in the Bessel and Laguerre settings which allow us to
    state our results.\\

    We consider for $\lambda>-1/2$, the Bessel operator $\Delta_\lambda$  defined by
    $$\Delta_\lambda
        = -x^{-2\lambda} \frac{d}{dx} x^{2\lambda} \frac{d}{dx}
        = - \frac{d^2}{dx^2}-\frac{2\lambda}{x} \frac{d}{dx},
        \quad \text{on } (0,\infty),$$
    and, if $J_\nu$ represents the Bessel function of the first kind and order $\nu$, the Hankel
    transformation $h_\lambda$ is given by
    $$h_\lambda(f)(x)
        = \int_0^\infty (xy)^{-\lambda+1/2} J_{\lambda-1/2}(xy)f(y)y^{2\lambda}dy,
        \quad x \in (0,\infty),$$
    for every $f \in L^1((0,\infty),x^{2\lambda}dx)$. $h_\lambda$ can be extended to
    $L^2((0,\infty),x^{2\lambda}dx)$ as an isometry in $L^2((0,\infty),x^{2\lambda}dx)$
    and $h_\lambda^{-1}=h_\lambda$. If $f \in C_c^\infty(0,\infty)$ we have that
    $$h_\lambda(\Delta_\lambda f)(x)
        = x^2 h_\lambda(f)(x),
        \quad x \in (0,\infty).$$
    This property suggests to extend the definition of $\Delta_\lambda$ as follows
    $$\Delta_\lambda f
        = h_\lambda (x^2 h_\lambda (f)),
        \quad f \in D(\Delta_\lambda),$$
    where
    $$D(\Delta_\lambda)
        = \{f \in L^2((0,\infty),x^{2\lambda}dx) : x^2 h_\lambda(f) \in L^2((0,\infty),x^{2\lambda}dx)\}.$$
    Thus, $\Delta_\lambda$ is a positive and selfadjoint operator. Moreover, $-\Delta_\lambda$
    generates a semigroup of operators $\{W_t^\lambda\}_{t>0}$ in $L^2((0,\infty),x^{2\lambda}dx)$
    where
    \begin{equation}\label{0.1}
        W_t^\lambda(f)
            = h_\lambda \left( e^{-ty^2} h_\lambda(f)\right),
            \quad f \in L^2((0,\infty),x^{2\lambda}dx) \text{ and } t>0.
    \end{equation}
    According to \cite[p. 195]{Wat} we can write, for $f \in L^2((0,\infty),x^{2\lambda}dx)$
    \begin{equation}\label{1.1}
        W_t^\lambda(f)(x)
            = \int_0^\infty W_t^\lambda(x,y) f(y) y^{2\lambda} dy,
            \quad   x,t \in (0,\infty),
    \end{equation}
    where the Hankel heat kernel semigroup $W_t^\lambda(x,y)$ is defined by
    $$W_t^\lambda(x,y)
        = \frac{(xy)^{-\lambda+1/2}}{2t} I_{\lambda-1/2}\left( \frac{xy}{2t} \right) e^{-(x^2+y^2)/4t},
        \quad x,y,t \in (0,\infty),$$
    and $I_\nu$ denotes the modified Bessel function of the first kind and order $\nu$.\\

    Since $\int_0^\infty W_t^\lambda(x,y) y^{2\lambda} dy=1$,
    $x,t \in (0,\infty)$, $\{W_t^\lambda\}_{t>0}$ defined by \eqref{1.1} is a diffusion semigroup in
    $L^p((0,\infty),x^{2\lambda}dx)$, $1 \leq p \leq \infty$.\\

    Suppose now that $\lambda=(\lambda_1, \dots, \lambda_n) \in (-1/2,\infty)^n$. We define the
    $n$-dimensional Bessel operator $\Delta_\lambda$ by
    $$\Delta_\lambda = \sum_{j=1}^n \Delta_{\lambda_j, x_j}.$$
    The operator $-\Delta_\lambda$ generates the diffusion semigroup $\{\W_t^\lambda\}_{t>0}$
    in $L^p((0,\infty)^n,d\mu_\lambda)$, $1 \leq p \leq \infty$, where
    $d\mu_\lambda(x) = \overset{n}{\underset{j=1}{\prod}} x_j^{2 \lambda_j} dx_j $,
    $x=(x_1, \dots, x_n) \in (0,\infty)^n$ and
    $$\W_t^\lambda(f)(x)
        = \int_{(0,\infty)^n} \W_t^\lambda(x,y) f(y) d\mu_\lambda(y),
          \quad f \in L^p((0,\infty)^n, d\mu_\lambda)  \text{ and } x,t \in (0,\infty), $$
    being
    $$\W_t^\lambda(x,y)
        = \prod_{j=1}^n W_t^{\lambda_j}(x_j,y_j),
        \quad x, y \in (0,\infty)^n \text{ and } t>0.$$
    The maximal operator $\W_*^\lambda$ associated with $\{\W_t^\lambda\}_{t>0}$ is defined by
    $$\W_*^\lambda(f)
       = \sup_{t>0} |\W_t^\lambda(f)|. $$
    In \cite[Theorem 1.1]{BCC} (also in \cite[Theorem 2.1]{BCN} when $\lambda \in (0,\infty)^n$
    and in \cite[Theorem 2.1]{BHNV} for $n=1$) it was proved that
    $\W_*^\lambda$ is a bounded operator from $L^1((0,\infty)^n, d\mu_\lambda)$ into
    $L^{1,\infty}((0,\infty)^n, d\mu_\lambda)$. Note that, since $\{\W_t^\lambda\}_{t>0}$ is a diffusion
    semigroup $\W_*^\lambda$ is bounded from $L^p((0,\infty)^n, d\mu_\lambda)$ into itself,
    for every $1<p \leq \infty$ (see \cite[p. 73]{Ste1}).\\

    Motivated by \cite{CC1} we consider a function $r=(r_1, \dots, r_n)$ where, for every
    $j=1, \dots, n$, $r_j : [0,\infty) \longrightarrow [0,\infty)$ is continuous and increasing,
    $r_j(0)=0$ and $\lim_{t \to + \infty} r_j(t) = + \infty$, and we define the maximal operator
    $$\W_{r,*}^\lambda(f)
        = \sup_{t>0} |\W_{r(t)}^\lambda(f)|,$$
    where
    $$\W_{r(t)}^\lambda(f)(x)
        = \int_{(0,\infty)^n} \W_{r(t)}^\lambda(x,y) f(y) d\mu_\lambda(y),
        \quad f \in L^p((0,\infty)^n, d\mu_\lambda), \ 1 \leq p \leq \infty,$$
    and
    $$\W_{r(t)}^\lambda(x,y)
        = \prod_{j=1}^n W_{r_j(t)}^{\lambda_j}(x_j,y_j),
        \quad x,y \in (0,\infty)^n \text{ and } t>0.$$
    It is clear that if $r_j(t)=t$, $t \geq 0$, $j=1, \dots, n$, then $\W_{r,*}^\lambda=\W_*^\lambda$.\\

    Our first result is the following one.

    \begin{Th}\label{Th1.1}
        Suppose that $\lambda \in (-1/2,\infty)^n$ and $r$ is a function as above. Then, the maximal operator
        $\W_{r,*}^\lambda$ is bounded from $L^p((0,\infty)^n, d\mu_\lambda)$ into itself, for every
        $1 < p \leq \infty$, and from $L^1((0,\infty)^n, d\mu_\lambda)$ into
        $L^{1,\infty}((0,\infty)^n, d\mu_\lambda)$.
    \end{Th}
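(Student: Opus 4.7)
The $L^p$-boundedness for $1 < p \leq \infty$ admits an elementary argument via iterated one-dimensional maximal operators. Since the kernel factorizes, one has the pointwise inequality
\[
\W_{r,*}^\lambda f(x) \;\leq\; \sup_{(t_1,\dots,t_n)\in(0,\infty)^n} \int_{(0,\infty)^n} \prod_{j=1}^n W_{t_j}^{\lambda_j}(x_j,y_j)\, |f(y)|\, d\mu_\lambda(y),
\]
and the right-hand side is dominated by the $n$-fold composition of the one-dimensional Bessel maximal operators $W_*^{\lambda_j}$ acting coordinatewise on $|f|$. Each $W_*^{\lambda_j}$ is bounded on $L^p((0,\infty),x^{2\lambda_j}dx)$ for $1 < p < \infty$ by Stein's theorem for diffusion semigroups, so iteration yields boundedness on $L^p((0,\infty)^n, d\mu_\lambda)$; the case $p=\infty$ is immediate from $\int \W_{r(t)}^\lambda(x,y)\, d\mu_\lambda(y) = 1$.

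The substantive content is the weak-$(1,1)$ estimate, where iteration fails and the Caffarelli--Calder\'on philosophy must be invoked. My plan is to split
\[
\W_{r(t)}^\lambda(x,y) \;=\; \W_{r(t)}^{\lambda,\mathrm{loc}}(x,y) + \W_{r(t)}^{\lambda,\mathrm{glob}}(x,y),
\]
where the local piece is supported on the product region $\{(x,y): |x_j - y_j| \leq c\min(x_j,y_j),\ 1\leq j\leq n\}$ and the global piece on its complement. Using the large-argument asymptotic $I_\nu(z) \sim (2\pi z)^{-1/2} e^z$, on the local region the factor $y_j^{2\lambda_j} W_{r_j(t)}^{\lambda_j}(x_j,y_j)$ is comparable to the Euclidean Gaussian $(4\pi r_j(t))^{-1/2}\exp(-(x_j - y_j)^2/4r_j(t))$ up to a bounded multiplicative factor (the weight ratio $(y_j/x_j)^{\lambda_j}$ being controlled on the local set). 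Consequently the local maximal operator is dominated by the generalized Caffarelli--Calder\'on maximal operator $\sup_{t>0} |P_{r(t)} \tilde f\,|$ associated with the product classical heat semigroup $P_{r(t)} = \prod_j p_{r_j(t)}$ acting on an extension $\tilde f$ of $|f|$ to $\mathbb{R}^n$. Its weak-$(1,1)$ boundedness is precisely the theorem of Caffarelli and Calder\'on \cite{CC1,CC2}, and it transfers back to the weighted quadrant thanks to the doubling character of $d\mu_\lambda$ on the local region.

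The global contribution is easier. Using $I_\nu(z) \lesssim z^\nu$ together with the factor $e^{-(x^2+y^2)/4r_j(t)}$, one obtains uniform-in-$t$ pointwise bounds for $\W_{r(t)}^{\lambda,\mathrm{glob}}(x,y) d\mu_\lambda(y)$ that dominate the supremum by a multiple of the Hardy--Littlewood maximal operator for the doubling measure $d\mu_\lambda$, which is weak-$(1,1)$ by standard theory. The main obstacle I anticipate is the technical verification that the local comparison with the Gaussian is uniform in $t > 0$ and across the product local region: the coupling of the coordinates through the single scalar $t$ (via the distinct maps $r_j$) prevents a coordinate-by-coordinate localization, so one must invoke a version of the Caffarelli--Calder\'on theorem adapted to the product Euclidean heat kernel with coordinate-dependent time reparametrizations. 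Once this uniform comparison is secured, the Caffarelli--Calder\'on result for the local piece together with the Hardy--Littlewood bound for the global piece assemble into the weak-$(1,1)$ estimate and complete the proof.
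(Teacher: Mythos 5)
Your argument for $1<p\leq\infty$ is correct (domination by iterated one--dimensional maximal operators plus Stein's theorem, with $p=\infty$ trivial from the normalization); this is a legitimate alternative to the paper, which instead interpolates the weak $(1,1)$ bound with the $L^\infty$ bound. The weak $(1,1)$ part, however, which is the substance of the theorem, has two genuine gaps. First, the claimed uniform local comparison with the Euclidean Gaussian is false as stated: the asymptotic $I_\nu(z)\sim(2\pi z)^{-1/2}e^{z}$ is valid only for $z=x_jy_j/(2r_j(t))$ large, and on the local set $\{|x_j-y_j|\leq c\min(x_j,y_j)\}$ nothing prevents $x_jy_j\lesssim r_j(t)$. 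In that regime $I_{\lambda_j-1/2}(z)\sim c\,z^{\lambda_j-1/2}$ and one finds $y_j^{2\lambda_j}W_{r_j(t)}^{\lambda_j}(x_j,y_j)\sim c\,y_j^{2\lambda_j}\,r_j(t)^{-\lambda_j-1/2}e^{-(x_j^2+y_j^2)/4r_j(t)}$, which for $-1/2<\lambda_j<0$ and $y_j\ll\sqrt{r_j(t)}$ is much larger than $(4\pi r_j(t))^{-1/2}e^{-(x_j-y_j)^2/4r_j(t)}$; so the domination by the classical heat kernel fails outright for negative parameters and is in any case not uniform in $t$. You flag this as ``the main obstacle,'' but it is not a verification to be carried out later: it is where the proof breaks, and repairing it for all $\lambda_j>-1/2$ essentially forces you to replace the Gaussian by $\mu_\lambda$-normalized interval averages. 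Second, the global piece is not ``easier'' in the way you claim: the complement of the product local region contains mixed zones in which some coordinates remain local, and in those coordinates the kernel is still concentrated at scale $\sqrt{r_j(t)}$; the resulting supremum is a one-parameter rectangle maximal operator (the coordinates coupled through the single $t$) combined with Hardy-type operators in the global coordinates, and it is not pointwise dominated by the Hardy--Littlewood maximal operator for $d\mu_\lambda$, since balls cannot control rectangle averages of arbitrary eccentricity. Thus the coupling difficulty you meant to confine to the local part reappears in the global part, and the weak $(1,1)$ estimate does not assemble as described.

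For contrast, the paper needs no local/global splitting at all. Its Proposition 2.1 gives, for every $\lambda>-1/2$ and all $x,y,r$, the single pointwise bound $W_r^\lambda(x,y)\leq C\sum_{k\geq 0}e^{-c2^{2k}}\mu_\lambda(I_k(x,r))^{-1}\chi_{I_k(x,r)}(y)$ with $I_k(x,r)=[x-2^k\sqrt r,\,x+2^k\sqrt r]\cap(0,\infty)$; taking products and suprema, $\W_{r,*}^\lambda$ is dominated by a rapidly convergent series of one-parameter rectangle maximal operators $\mathcal{M}_{r,k}^\lambda$, each of which is weak $(1,1)$ with constant $6^n n!$ independent of $k$ and of $r$ by Caffarelli--Calder\'on's Theorem 1 in \cite{CC1} (applied directly to the measure $\mu_\lambda$, so no transference to $\mathbb{R}^n$ is needed), and a weighted pigeonholing of the level set sums the estimates. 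If you wish to salvage your plan, the cleanest repair is exactly this: prove the dyadic-annulus bound with $\mu_\lambda$-normalized intervals in place of the Gaussian comparison, and invoke Caffarelli--Calder\'on in the weighted quadrant itself.
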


    An immediate consequence of Theorem~\ref{Th1.1} is the next convergence result.

    \begin{Cor}\label{Cor1.2}
        Let $\lambda \in (-1/2,\infty)^n$ and $r$ be a function as above. Then,
        for every $f \in L^p((0,\infty)^n, d\mu_\lambda)$, $1 \leq p < \infty$,
        $$\lim_{t \to 0^+} \W_{r(t)}^\lambda(f)(x)=f(x), \quad \text{a.e. } x \in (0,\infty)^n.$$
    \end{Cor}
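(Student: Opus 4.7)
The plan is to run the textbook density/maximal-function argument, with Theorem~\ref{Th1.1} as the black box. First I would establish pointwise (indeed uniform) convergence on a dense subclass, and then transfer to all of $L^p$ using the weak-type bound supplied by Theorem~\ref{Th1.1}.

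For the dense subclass I would take $g \in C_c((0,\infty)^n)$, which is dense in $L^p((0,\infty)^n, d\mu_\lambda)$ for $1 \leq p < \infty$. Since each $r_j$ is continuous on $[0,\infty)$ with $r_j(0)=0$, we have $r_j(t) \to 0^+$ as $t \to 0^+$. Because $\int_0^\infty W_s^{\lambda_j}(x_j,y_j)\, y_j^{2\lambda_j}\, dy_j = 1$, the product kernel satisfies $\int \W_{r(t)}^\lambda(x,y)\, d\mu_\lambda(y) = 1$, so
$$\W_{r(t)}^\lambda(g)(x)-g(x)=\int_{(0,\infty)^n}\W_{r(t)}^\lambda(x,y)\,[g(y)-g(x)]\,d\mu_\lambda(y).$$
Splitting the integration into $\{|y-x|<\delta\}$ and its complement, and using uniform continuity of $g$ on the first piece, boundedness of $g$ on the second, together with the standard Gaussian-type estimate for $W_s^{\lambda_j}(x_j,y_j)$ (which gives $\int_{|y_j-x_j|\geq \delta} W_{s}^{\lambda_j}(x_j,y_j)\,y_j^{2\lambda_j}dy_j \to 0$ as $s \to 0^+$ uniformly for $x_j$ in a compact set), I obtain $\W_{r(t)}^\lambda(g)(x) \to g(x)$ as $t \to 0^+$, uniformly on compact subsets, and in particular pointwise everywhere.

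For general $f \in L^p((0,\infty)^n, d\mu_\lambda)$ I would apply the by-now-standard Stein-type transference. Given $\eta>0$, choose $g \in C_c((0,\infty)^n)$ with $\|f-g\|_p<\eta$ and write
$$\Lambda(f)(x):=\limsup_{t\to 0^+}|\W_{r(t)}^\lambda(f)(x)-f(x)|\leq \W_{r,*}^\lambda(f-g)(x)+|(f-g)(x)|,$$
because $\Lambda(g) \equiv 0$ by the previous step. For $\varepsilon>0$, Theorem~\ref{Th1.1} together with Chebyshev's inequality then gives, in the case $p=1$,
$$\mu_\lambda\{x:\Lambda(f)(x)>\varepsilon\}\leq \frac{C\|f-g\|_1}{\varepsilon}+\frac{\|f-g\|_1}{\varepsilon/2}\leq \frac{C'\eta}{\varepsilon},$$
and analogously with the strong $(p,p)$ bound if $1<p<\infty$. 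Letting $\eta \to 0^+$ and then $\varepsilon \to 0^+$ along a sequence yields $\Lambda(f)=0$ $\mu_\lambda$-a.e., which is the claim.

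The only substantive point is the approximate-identity step for $g \in C_c$; everything else is routine. No new estimate beyond the normalization $\int W_s^{\lambda_j}(x_j,\cdot)\,y_j^{2\lambda_j}dy_j = 1$ and the (well-known) Gaussian bound for $W_s^{\lambda_j}(x_j,y_j)$ coming from the asymptotics of $I_{\lambda-1/2}$ is required, and Theorem~\ref{Th1.1} handles both the $p=1$ and $p>1$ cases uniformly.
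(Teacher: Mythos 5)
Your argument is correct, and it coincides with the paper's proof in its overall architecture: both reduce the corollary to pointwise convergence on a dense subclass and then transfer to all of $L^p$ via the weak type $(1,1)$ and strong type $(p,p)$ bounds for $\W_{r,*}^\lambda$ from Theorem~\ref{Th1.1} (a step the paper compresses into ``it is sufficient to see that\dots''). Where you genuinely diverge is in the dense-class step, which is the only part the paper writes out: the paper takes $f\in C_c^\infty((0,\infty)^n)$ and uses the spectral definition \eqref{0.1} of the semigroup, so that $\W_{r(t)}^\lambda(f)=h_\lambda\bigl(\prod_j e^{-y_j^2 r_j(t)}h_\lambda(f)\bigr)$, and concludes by dominated convergence together with the Hankel inversion $h_\lambda^{-1}=h_\lambda$; you instead take $g\in C_c((0,\infty)^n)$ and run a direct approximate-identity argument, using the normalization $\int_0^\infty W_s^{\lambda_j}(x_j,y_j)y_j^{2\lambda_j}\,dy_j=1$ and the off-diagonal smallness of the kernel. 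Your needed estimate, namely that $\int_{|y_j-x_j|\ge\delta}W_s^{\lambda_j}(x_j,y_j)y_j^{2\lambda_j}\,dy_j\to 0$ as $s\to 0^+$, is indeed available: it follows from Lemma~\ref{Lem2.1}, and even more cheaply from Proposition~\ref{Prop2.1}, since $I_k(x,s)\cap\{|y-x|\ge\delta\}=\emptyset$ unless $2^k\sqrt{s}\ge\delta$, which bounds the integral by $C\sum_{2^k\ge\delta/\sqrt{s}}e^{-c2^{2k}}$, uniformly in $x$. What each route buys: the paper's is shorter because the Bessel semigroup is defined through the Hankel transform, so inversion plus dominated convergence does all the work without any kernel analysis; yours is more elementary and self-contained, relying only on the kernel bounds already proved for Theorem~\ref{Th1.1} and the kernel's mass normalization, and it gives uniform convergence on compacta as a byproduct; it also transfers verbatim to the Laguerre semigroup, where no transform inversion is at hand (the paper handles Corollary~\ref{Cor1.4} by eigenfunction expansions instead). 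The only cosmetic slip is in your Chebyshev line, where the maximal term should be measured against $\varepsilon/2$ as well, but this does not affect the conclusion.
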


    We now consider the Laguerre operator $\mathcal{L}_\lambda$, $\lambda>-1/2$, defined by
    $$\mathcal{L}_\lambda = \Delta_\lambda + \frac{x^2}{4}, \quad \text{on } (0,\infty).$$
    Also, for every $k \in \mathbb{N}$, we define the $k$-th Laguerre function $\psi_k^\lambda$ by
    $$\psi_k^\lambda(x)
        = 2^{-(2\lambda-1)/4} \left( \frac{k!}{\G(k + \lambda + 1/2)} \right)^{1/2}
            L_k^{\lambda-1/2}\left( \frac{x^2}{2}\right)e^{-x^2/4},
        \quad x \in (0,\infty),$$
    where $L_k^\alpha$ denotes the $k$-th Laguerre polynomial with parameter $\alpha>-1$.
    The system $\{\psi_k^\lambda\}_{k \in \mathbb{N}}$ is a complete orthonormal family in
    $L^2((0,\infty), x^{2\lambda}dx)$. Moreover,
    $$\mathcal{L}_\lambda (\psi_k^\lambda) = (2k+\lambda +1/2) \psi_k^\lambda, \quad k \in \mathbb{N}.$$
    We extend the definition of the operator $\mathcal{L}_\lambda$ as follows
    $$\mathcal{L}_\lambda (f)
        = \sum_{k=0}^\infty (2k+\lambda +1/2) \langle f,\psi_k^\lambda\rangle \psi_k^\lambda,
        \quad f \in D(\mathcal{L}_\lambda), $$
    where $\langle \cdot , \cdot \rangle$ denotes the usual inner product in $L^2((0,\infty), x^{2\lambda}dx)$,
    and
    $$D(\mathcal{L}_\lambda)
        = \{f \in L^2((0,\infty), x^{2\lambda}dx) :
        \sum_{k=0}^\infty (2k+\lambda+1/2)^2 |\langle f,\psi_k^\lambda\rangle|^2<\infty \}.$$
    Thus, $\mathcal{L}_\lambda$ is positive and selfadjoint in $L^2((0,\infty), x^{2\lambda}dx)$.
    Moreover, $-\mathcal{L}_\lambda$ generates a diffusion semigroup $\{L_t^\lambda\}_{t>0}$ on
    $L^2((0,\infty), x^{2\lambda}dx)$ where, for every $t>0$,
    \begin{equation}\label{1.2}
        L_t^\lambda(f)(x)
            = \int_0^\infty L_t^\lambda(x,y) f(y) y^{2\lambda} dy,
            \quad  f \in L^2((0,\infty), x^{2\lambda}dx), \  x,t \in  (0,\infty),
    \end{equation}
    being
    $$L_t^\lambda(x,y)
        = \frac{e^{-t}}{1-e^{-2t}} (xy)^{-\lambda+1/2}
            I_{\lambda-1/2}\left( \frac{e^{-t}xy}{1-e^{-2t}} \right)
            \exp\left( -\frac{1}{4} \frac{1+e^{-2t}}{1-e^{-2t}} (x^2 + y^2)\right),
        \quad x,y,t \in (0,\infty).$$
    Moreover, \eqref{1.2} defines also a diffusionp semigroup in $L^p((0,\infty), x^{2\lambda}dx)$,
    $1 \leq p \leq \infty$.\\

    Suppose now that $\lambda \in (-1/2,\infty)^n$. The $n$-dimensional heat Laguerre semigroup
    $\{\LLL_t^\lambda\}_{t>0}$ is defined as follows. For every $t>0$, $f \in L^p((0,\infty)^n, d\mu_\lambda)$,
    $1 \leq p \leq \infty$, we write
    $$\LLL_t^\lambda (f)(x)
        = \int_{(0,\infty)^n} \LLL_t^\lambda(x,y) f(y) d\mu_\lambda(y),
        \quad x \in (0,\infty)^n,$$
    being
    $$\LLL_t^\lambda(x,y)
        = \prod_{j=1}^n L_t^{\lambda_j}(x_j,y_j),
        \quad x,y \in (0,\infty)^n, \ t>0.$$
    In \cite[Theorem 1.1]{NS1} it was showed that the maximal operator $\LLL_*^\lambda$,
    defined by
    $$\LLL_*^\lambda(f)
        = \sup_{t>0} |\LLL_t^\lambda(f)|,$$
    is bounded from $L^1((0,\infty)^n, d\mu_\lambda)$ into $L^{1,\infty}((0,\infty)^n, d\mu_\lambda)$
    by employing an ingenious but long and not easy procedure.\\

    Assume that the function $r : [0,\infty) \longrightarrow [0,\infty)^n$ is as in Theorem~\ref{Th1.1}.
    We define the maximal operator $\LLL_{r,*}^\lambda$ by
    $$\LLL_{r,*}^\lambda(f)
        = \sup_{t>0} |\LLL_{r(t)}^\lambda(f)|,$$
    where
    $$\LLL_{r(t)}^\lambda (f)(x)
        = \int_{(0,\infty)^n} \LLL_{r(t)}^\lambda(x,y) f(y) d\mu_\lambda(y),
        \quad x \in (0,\infty)^n, \ t>0,$$
    being
    $$\LLL_{r(t)}^\lambda(x,y)
        = \prod_{j=1}^n L_{r_j(t)}^{\lambda_j}(x_j,y_j),
        \quad x,y \in (0,\infty)^n, \ t>0.$$

    Since $|L_t^\lambda(f)| \leq W_t^\lambda(|f|)$, $t>0$, from Theorem~\ref{Th1.1}
    we deduce the following result that includes as a special case \cite[Theorem 1.1]{NS1}.

    \begin{Th}\label{Th1.2}
        Suppose that $\lambda \in (-1/2,\infty)^n$ and $r$ is as in Theorem~\ref{Th1.1}.
        Then, the maximal operator
        $\LLL_{r,*}^\lambda$ is bounded from $L^p((0,\infty)^n, d\mu_\lambda)$ into itself, for every
        $1 < p \leq \infty$, and from $L^1((0,\infty)^n, d\mu_\lambda)$ into
        $L^{1,\infty}((0,\infty)^n, d\mu_\lambda)$.
    \end{Th}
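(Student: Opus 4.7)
The plan is to reduce Theorem~\ref{Th1.2} to Theorem~\ref{Th1.1} through a pointwise kernel comparison, as suggested by the remark preceding the statement.

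The main step is the one-dimensional pointwise bound $L_t^\lambda(x,y)\le W_t^\lambda(x,y)$ for all $x,y,t>0$ and every $\lambda>-1/2$. Conceptually this is the Feynman--Kac principle: since $\mathcal{L}_\lambda=\Delta_\lambda+x^2/4$ differs from the Bessel operator only by a non-negative potential, the associated heat kernel must be dominated by the Bessel heat kernel. However, rather than invoking semigroup-theoretic machinery, I would verify the inequality directly from the explicit formulas. Factor the Laguerre kernel as
\[
L_t^\lambda(x,y)=\frac{(xy)^{-\lambda+1/2}}{2\sinh t}\,I_{\lambda-1/2}\!\left(\frac{xy}{2\sinh t}\right)e^{-(x^2+y^2)\coth(t)/4},
\]
and compare each factor with the corresponding one in $W_t^\lambda(x,y)$. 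The comparison rests on two elementary inequalities: $\sinh t\ge t$ and $t\cosh t\ge \sinh t$ (both seen at once from the power series), which give $\coth t\ge 1/t$, so that $e^{-(x^2+y^2)\coth(t)/4}\le e^{-(x^2+y^2)/(4t)}$. For the remaining factor I would use that the function $s\mapsto s^{-\lambda-1/2}I_{\lambda-1/2}(u/s)$ is decreasing in $s>0$ for fixed $u\ge 0$; this follows from the series expansion of $I_\nu(z)/z^\nu$, which has non-negative coefficients in $z^2$ and is hence increasing in $z\ge 0$. Combining these ingredients and using $\sinh t\ge t$ yields the pointwise bound $L_t^\lambda(x,y)\le W_t^\lambda(x,y)$.

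Once this one-dimensional estimate is established, the $n$-dimensional case is immediate by taking products: for every $t>0$, $x,y\in(0,\infty)^n$, and $\lambda\in(-1/2,\infty)^n$,
\[
\LLL_{r(t)}^\lambda(x,y)=\prod_{j=1}^n L_{r_j(t)}^{\lambda_j}(x_j,y_j)\le \prod_{j=1}^n W_{r_j(t)}^{\lambda_j}(x_j,y_j)=\W_{r(t)}^\lambda(x,y).
\]
Therefore, for each $f\in L^p((0,\infty)^n,d\mu_\lambda)$,
\[
\bigl|\LLL_{r(t)}^\lambda(f)(x)\bigr|\le \int_{(0,\infty)^n}\LLL_{r(t)}^\lambda(x,y)|f(y)|\,d\mu_\lambda(y)\le \W_{r(t)}^\lambda(|f|)(x),
\]
and taking the supremum in $t>0$ gives the pointwise domination $\LLL_{r,*}^\lambda(f)\le \W_{r,*}^\lambda(|f|)$.

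The conclusion then follows immediately from Theorem~\ref{Th1.1} applied to the same function $r$: the maximal operator $\W_{r,*}^\lambda$ is bounded on $L^p((0,\infty)^n,d\mu_\lambda)$ for $1<p\le\infty$ and of weak type $(1,1)$, and the pointwise inequality transfers both properties to $\LLL_{r,*}^\lambda$. The only non-routine step is the kernel comparison in the first paragraph; there, the mild technical obstacle is handling the modified Bessel factor, which I would address through the monotonicity of $z\mapsto I_\nu(z)/z^\nu$ rather than through any asymptotic analysis, since an elementary series argument is enough.
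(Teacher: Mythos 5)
Your proposal is correct and is essentially the paper's own proof: the paper settles Theorem~\ref{Th1.2} in one line, observing $|L_t^\lambda(f)|\le W_t^\lambda(|f|)$ and invoking Theorem~\ref{Th1.1}, and you merely supply the verification of that kernel domination via $\coth t\ge 1/t$, $\sinh t\ge t$ and a monotonicity argument. One small repair in that verification: the quantity that must be (and is) decreasing in $s$ is $s^{-1}I_{\lambda-1/2}(u/s)$, which up to the constant $u^{\lambda-1/2}$ equals $s^{-\lambda-1/2}\,(u/s)^{-(\lambda-1/2)}I_{\lambda-1/2}(u/s)$; the function $s^{-\lambda-1/2}I_{\lambda-1/2}(u/s)$ as you literally wrote it fails to be decreasing when $-1/2<\lambda<0$ (it behaves like $s^{-2\lambda}$ for large $s$), but your own stated reason --- the increase of $z^{-\nu}I_\nu(z)$ from its non-negative series coefficients, combined with $\lambda+1/2>0$ --- proves exactly the corrected statement, so the argument goes through for all $\lambda>-1/2$.
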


    If we denote, for every $k=(k_1, \dots, k_n) \in \mathbb{N}^n$, and $\lambda \in (-1/2,\infty)^n$,
    $\psi_k^\lambda(x)= \overset{n}{\underset{j=1}{\prod}} \psi_{k_j}^{\lambda_j}(x_j)$,
    $x \in (0,\infty)^n$, the subspace
    $\spann \{\psi_k^\lambda\}_{k \in \mathbb{N}^n}$ is dense in $L^p((0,\infty)^n, d\mu_\lambda)$,
    $1 \leq p < \infty$. For every $f \in \spann \{\psi_k^\lambda\}_{k \in \mathbb{N}^n}$, we have that
    $$\LLL_{r(t)}^\lambda (f)
        = \sum_{k \in \mathbb{N}^n} e^{-\overset{n}{\underset{j=1}{\sum}}r_j(t)(2k_j+\lambda_j+1/2)}
            \langle f , \psi_k^\lambda \rangle \psi_k^\lambda.$$
    Since this last sum has at most a finite number of terms it is clear that
    $\lim_{t \to 0^+} \LLL_{r(t)}^\lambda(f)(x) = f(x), \quad x \in (0,\infty)^n,$ for every
    $f \in \spann \{\psi_k^\lambda\}_{k \in \mathbb{N}^n}$. Then, standard arguments allow us to deduce the following
    convergence result.

    \begin{Cor}\label{Cor1.4}
        Let $\lambda \in (-1/2,\infty)^n$ and $r$ be as in Theorem~\ref{Th1.1}. Then,
        for every $f \in L^p((0,\infty)^n, d\mu_\lambda)$, $1 \leq p < \infty$,
        $$\lim_{t \to 0^+} \LLL_{r(t)}^\lambda(f)(x)=f(x), \quad \text{a.e. } x \in (0,\infty)^n.$$
    \end{Cor}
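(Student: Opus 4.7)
The plan is a standard density-plus-maximal-inequality argument in the spirit of the Banach principle, using the two ingredients already displayed in the text: the subspace $V := \spann\{\psi_k^\lambda\}_{k \in \mathbb{N}^n}$ is dense in $L^p((0,\infty)^n, d\mu_\lambda)$ for $1 \leq p < \infty$, and, since $V$ is made up of finite linear combinations of eigenfunctions, the convergence $\lim_{t \to 0^+} \LLL_{r(t)}^\lambda(g)(x) = g(x)$ holds for every $x \in (0,\infty)^n$ whenever $g \in V$.

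Fix $f \in L^p((0,\infty)^n, d\mu_\lambda)$ and, for $\eta > 0$, consider
$$E_\eta = \Bigl\{ x \in (0,\infty)^n : \limsup_{t \to 0^+} |\LLL_{r(t)}^\lambda(f)(x) - f(x)| > \eta \Bigr\}.$$
Given $\varepsilon > 0$, by density choose $g \in V$ with $\|f - g\|_{L^p(d\mu_\lambda)} < \varepsilon$. Writing $f = g + (f-g)$ and using $\LLL_{r(t)}^\lambda(g)(x) \to g(x)$ pointwise, the triangle inequality gives
$$\limsup_{t \to 0^+} |\LLL_{r(t)}^\lambda(f)(x) - f(x)| \leq \LLL_{r,*}^\lambda(f-g)(x) + |f(x) - g(x)|,$$
so that $E_\eta \subset \{\LLL_{r,*}^\lambda(f-g) > \eta/2\} \cup \{|f-g| > \eta/2\}$.

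Now I apply Theorem~\ref{Th1.2}. In the case $1 < p < \infty$, the strong $(p,p)$ bound for $\LLL_{r,*}^\lambda$ together with Chebyshev yields
$$\mu_\lambda(E_\eta) \leq \Bigl(\frac{2}{\eta}\Bigr)^p \bigl( \|\LLL_{r,*}^\lambda(f-g)\|_p^p + \|f-g\|_p^p \bigr) \leq C_p \Bigl(\frac{2}{\eta}\Bigr)^p \varepsilon^p,$$
while in the case $p = 1$ the weak type $(1,1)$ bound gives the analogous estimate $\mu_\lambda(E_\eta) \leq C \varepsilon/\eta$. In either case, letting $\varepsilon \to 0$ yields $\mu_\lambda(E_\eta) = 0$ for every $\eta > 0$, and then $\bigcup_{k \geq 1} E_{1/k}$ is the null set off which the desired pointwise limit holds.

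There is no real obstacle here: the two non-trivial inputs (density of $V$ and the maximal inequality for $\LLL_{r,*}^\lambda$) have already been supplied, so the proof is a routine specialisation of the classical principle that a.e.\ convergence on a dense set plus a weak-type maximal bound gives a.e.\ convergence on the whole space.
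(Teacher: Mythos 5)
Your proof is correct and is exactly the ``standard arguments'' the paper alludes to: pointwise convergence on the dense subspace $\spann\{\psi_k^\lambda\}_{k\in\mathbb{N}^n}$ combined with the maximal inequalities of Theorem~\ref{Th1.2} (weak $(1,1)$ for $p=1$, strong $(p,p)$ for $1<p<\infty$) via the usual limsup/triangle-inequality estimate. Nothing is missing; you have simply written out in detail the argument the paper leaves implicit.
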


    In the next section we present the proofs of Theorems~\ref{Th1.1} and Corollary~\ref{Cor1.2}.\\

    Throughout the paper by $C$ and $c$ we denote positive constants that can change from one line to the other.

    \section{Proof of the results}  \label{sec:proofs}

    In order to prove Theorem~\ref{Th1.1} we need some properties of the Bessel heat kernel $W_r^\lambda(x,y)$, $r,x,y \in (0,\infty)$,
    $\lambda>-1/2$.\\

    By proceeding as in the proof of \cite[Lemma 3.1]{BHNV} we can show the following result.

    \begin{Lem}\label{Lem2.1}
        Let $\lambda>-1/2$. Then, for every $r,x,y  \in (0,\infty)$,
        \begin{numcases}{W_r^\lambda(x,y) \leq C}
            x^{-2\lambda-1} e^{-cx^2/r}, & $0<y \leq x/2$; \label{2.1}\\
            x^{-2\lambda-1} e^{-cx^2/r} + \dfrac{(xy)^{-\lambda}}{\sqrt{r}}e^{-(x-y)^2/4r}, & $x/2<y <2x$; \label{2.2} \\
            y^{-2\lambda-1} e^{-cy^2/r}, & $0<2x \leq y$. \label{2.3}
        \end{numcases}
    \end{Lem}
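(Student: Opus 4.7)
The plan is to use the explicit formula
$$W_r^\lambda(x,y) = \frac{(xy)^{-\lambda+1/2}}{2r}\, I_{\lambda-1/2}\!\left(\frac{xy}{2r}\right) e^{-(x^2+y^2)/4r}$$
together with the two-sided asymptotics for the modified Bessel function $I_\nu$: namely $I_\nu(z) \le C z^\nu$ for $0<z\le 1$ and $I_\nu(z) \le C z^{-1/2} e^{z}$ for $z\ge 1$ (for $\nu>-1$). I would introduce the splitting parameter $z:=xy/(2r)$ and consider separately $z\le 1$ and $z\ge 1$.

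\textbf{Small-argument regime $z\le 1$.} Plugging the bound $I_{\lambda-1/2}(z)\le C z^{\lambda-1/2}$ into the kernel makes the factor $(xy)^{-\lambda+1/2}$ cancel, leaving
$$W_r^\lambda(x,y) \le \frac{C}{r^{\lambda+1/2}}\, e^{-(x^2+y^2)/4r}.$$
In all three regions one has $\max(x,y)^2\le 4(x^2+y^2)$, so, writing $u=\max(x,y)^2/r$, this reduces to showing $u^{\lambda+1/2} e^{-u/c} \le C e^{-cu}$, which is immediate; this yields the first summand of each of \eqref{2.1}–\eqref{2.3}.

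\textbf{Large-argument regime $z\ge 1$.} Here $I_{\lambda-1/2}(z)\le C z^{-1/2} e^z$, and substituting gives the clean identity
$$W_r^\lambda(x,y) \le C\, \frac{(xy)^{-\lambda}}{\sqrt{r}}\, e^{-(x-y)^2/4r},$$
which is exactly the second summand in \eqref{2.2}; this already settles the middle region. The main (and only nonroutine) obstacle is to show that in the outer regions $y\le x/2$ and $2x\le y$ this large-argument term is dominated by $x^{-2\lambda-1} e^{-cx^2/r}$ (respectively $y^{-2\lambda-1} e^{-cy^2/r}$). By the symmetry $W_r^\lambda(x,y)=W_r^\lambda(y,x)$ it suffices to treat $y\le x/2$. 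Then $(x-y)^2\ge x^2/4$, and one must estimate $(xy)^{-\lambda}/\sqrt{r}$. The sign of $\lambda$ matters: if $\lambda<0$ the bound $y^{-\lambda}\le (x/2)^{-\lambda}$ holds trivially, while if $\lambda\ge 0$ one uses the very hypothesis of the large-$z$ regime, $y\ge 2r/x$, to obtain $y^{-\lambda}\le (x/(2r))^{\lambda}$. In both cases one factors out $x^{-2\lambda-1}$ and is left with a quantity of the form $(x^2/r)^{\lambda+1/2} e^{-x^2/16r}$, which, by absorbing polynomial growth into a fraction of the Gaussian, is $\le C e^{-cx^2/r}$.

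Combining the two regimes gives all three inequalities of the lemma, and region \eqref{2.3} follows from \eqref{2.1} by swapping $x\leftrightarrow y$. The whole argument is essentially bookkeeping around the standard $I_\nu$ asymptotics, as already sketched in \cite[Lemma 3.1]{BHNV}; the only slightly delicate point is the case analysis on the sign of $\lambda$ described above.
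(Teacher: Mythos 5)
Your proof is correct and takes essentially the same route as the paper, which for this lemma simply defers to the standard argument of \cite[Lemma 3.1]{BHNV}: bound $I_{\lambda-1/2}$ by $Cz^{\lambda-1/2}$ for $z\le 1$ and by $Cz^{-1/2}e^{z}$ for $z\ge 1$ (valid since $\lambda-1/2>-1$), and absorb the resulting polynomial factors into a fraction of the Gaussian, using $y\ge 2r/x$ in the large-argument outer region and symmetry of the kernel for \eqref{2.3}. The only nitpick is that in the outer region with $\lambda<0$ the leftover factor is $(x^{2}/r)^{1/2}$ rather than $(x^{2}/r)^{\lambda+1/2}$, which changes nothing in the absorption step.
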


    According to \cite[Chapter VI, Section 6.15]{Wat}, if $\nu>-1/2$ we can write
    $$I_\nu(z)
        = \frac{z^\nu}{\sqrt{\pi} 2^\nu \G(\nu+1/2)} \int_{-1}^1 e^{-zs}(1-s^2)^{\nu-1/2}ds,
        \quad z \in (0,\infty).$$
    Moreover, $I_\nu(z) = 2(\nu+1) I_{\nu+1}(z)/z + I_{\nu+2}(z)$, $ z \in (0,\infty)$ and $\nu>-1$ (\cite[Chapter III, Section $3 \cdot 71$]{Wat}).
    Hence, if $\lambda>-1/2$ we obtain, for every $z \in (0,\infty)$,
    \begin{align*}
        I_{\lambda-1/2}(z)
            = & \frac{2\lambda + 1}{z}I_{\lambda+1/2}(z) + I_{\lambda+3/2}(z)\\
            = & \frac{(2\lambda + 1)z^{\lambda-1/2}}{\sqrt{\pi} 2^{\lambda+1/2} \G(\lambda+1)} \int_{-1}^1 e^{-zs}(1-s^2)^{\lambda}ds
              + \frac{z^{\lambda+3/2}}{\sqrt{\pi} 2^{\lambda+3/2} \G(\lambda+2)} \int_{-1}^1 e^{-zs}(1-s^2)^{\lambda+1}ds.
    \end{align*}
    Then, the Bessel heat kernel can be written as
    \begin{align}\label{2.4}
        W_r^\lambda(x,y)
            = & \frac{1}{\sqrt{\pi} 2^{2\lambda+1} \G(\lambda+1)} \Big( \frac{2\lambda+1}{r^{\lambda+1/2}} \int_{-1}^1 e^{-(x^2+y^2+2xys)/4r}(1-s^2)^{\lambda}ds \nonumber \\
            & + \frac{(xy)^2}{2^3(\lambda+1)r^{\lambda+5/2}} \int_{-1}^1 e^{-(x^2+y^2+2xys)/4r}(1-s^2)^{\lambda+1}ds \Big),
            \quad r,x,y \in (0,\infty),
    \end{align}
    where $\lambda>-1/2$.\\

    The key result to show Theorem~\ref{Th1.1} is the following.

    \begin{Prop}\label{Prop2.1}
        Let $\lambda>-1/2$. Then, there exist $C,c>0$ such that
        $$W_r^\lambda(x,y)
            \leq C \sum_{k=0}^\infty \frac{e^{-c2^{2k}}}{\mu_\lambda(I_k(x,r))} \chi_{I_k(x,r)}(y),
            \quad r,x,y \in (0,\infty),$$
        where $I_k(x,r)=[x-2^k\sqrt{r},x+2^k\sqrt{r}] \cap (0,\infty)$, $r,x \in (0,\infty)$ and $k \in \mathbb{N}$.
    \end{Prop}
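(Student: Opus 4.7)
Fix $x,r,y>0$ and let $k_0 = k_0(x,y,r) \ge 0$ be the unique integer with $y \in I_{k_0}(x,r) \setminus I_{k_0-1}(x,r)$ (setting $I_{-1} = \emptyset$). Since the sum on the right-hand side of the claim contains the $k_0$-th term, the proposition reduces to the single pointwise estimate
$$W_r^\lambda(x,y) \le C\, \frac{e^{-c\,2^{2k_0}}}{\mu_\lambda(I_{k_0}(x,r))}.$$

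The plan is to first extract a Gaussian factor from the integral representation \eqref{2.4}. Using $x^2 + y^2 + 2xys = (x-y)^2 + 2xy(1+s)$ and substituting $u = 1+s \in [0,2]$, the resulting $u$-integrals are bounded by $\min\{1, (r/xy)^{\lambda+1}\}$ and $\min\{1, (r/xy)^{\lambda+2}\}$, respectively, which yields
$$W_r^\lambda(x,y) \le C\, e^{-(x-y)^2/4r}
\begin{cases} r^{-\lambda-1/2}, & xy \le r,\\[2pt] r^{-1/2}(xy)^{-\lambda}, & xy > r. \end{cases}$$
For $k_0 \ge 1$, the condition $|x-y| > 2^{k_0-1}\sqrt{r}$ gives $e^{-(x-y)^2/4r} \le C e^{-c\,2^{2k_0}}$; for $k_0 = 0$, the trivial $e^{-(x-y)^2/4r} \le 1$ is absorbed into a multiplicative constant.

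It remains to compare this bound to $\mu_\lambda(I_{k_0}(x,r))$. A direct computation gives $\mu_\lambda(I_{k_0}(x,r)) \sim (2^{k_0}\sqrt{r})^{2\lambda+1}$ when $x \le 2^{k_0}\sqrt{r}$ (so that the interval touches the origin) and $\mu_\lambda(I_{k_0}(x,r)) \sim x^{2\lambda}\cdot 2^{k_0}\sqrt{r}$ otherwise. In the first case, the identity $r^{-\lambda-1/2} \sim (2^{k_0})^{2\lambda+1}/\mu_\lambda(I_{k_0}(x,r))$ combined with the crude bound produces a polynomial loss $(2^{k_0})^{2\lambda+1}$. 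In the second case, $y \in I_{k_0}(x,r)$ together with $x > 2\cdot 2^{k_0}\sqrt{r}$ forces $y \sim x$, hence $xy \sim x^2 > r$ and the sharper bound applies with $r^{-1/2}(xy)^{-\lambda} \sim 2^{k_0}/\mu_\lambda(I_{k_0}(x,r))$, losing only a factor $2^{k_0}$; the transitional sub-case $2^{k_0}\sqrt{r} < x \le 2\cdot 2^{k_0}\sqrt{r}$ is handled exactly as the first since then $x \sim 2^{k_0}\sqrt{r}$ and $xy/r$ remains bounded. In every case the polynomial loss is swallowed by the super-exponential via $(2^{k_0})^\alpha e^{-c\,2^{2k_0}} \le C_\alpha e^{-c'\,2^{2k_0}}$ for any $0 < c' < c$.

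The principal technical point is the boundary $y = 0$: it makes the scaling of $\mu_\lambda(I_{k_0}(x,r))$ genuinely depend on the ratio $x/(2^{k_0}\sqrt{r})$ and forces the case split above. The super-exponential decay $e^{-c\,2^{2k_0}}$, rather than a merely polynomial tail, is precisely what allows the polynomial-in-$2^{k_0}$ losses from the case analysis to be absorbed harmlessly, yielding the uniform estimate asserted by the proposition.
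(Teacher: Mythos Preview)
Your proof is correct and follows the same overall architecture as the paper's: reduce to a single dyadic shell, bound the kernel pointwise, estimate $\mu_\lambda(I_{k_0}(x,r))$ according to whether $x\lesssim 2^{k_0}\sqrt{r}$ or $x\gtrsim 2^{k_0}\sqrt{r}$, and absorb polynomial losses in $2^{k_0}$ into the Gaussian. The organizational difference is in how the kernel bound is obtained. The paper invokes Lemma~\ref{Lem2.1} (a trichotomy in $y$ relative to $x$, coming from the asymptotics of $I_\nu$) in the regime $x>2^k\sqrt r$, and uses the integral representation \eqref{2.4} directly only when $x\le 2^k\sqrt r$. You instead extract from \eqref{2.4} the single unified estimate
\[
W_r^\lambda(x,y)\le C\,e^{-(x-y)^2/4r}\min\bigl\{r^{-\lambda-1/2},\,r^{-1/2}(xy)^{-\lambda}\bigr\},
\]
obtained by bounding the $u$-integrals by $\min\{1,(r/xy)^{\lambda+1}\}$ and $\min\{1,(r/xy)^{\lambda+2}\}$, and then split according to $xy$ versus $r$ rather than $y$ versus $x$. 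This bypasses Lemma~\ref{Lem2.1} entirely and makes the subsequent case analysis a bit shorter; the paper's route, on the other hand, keeps the kernel estimates and the measure estimates more visibly parallel. One small point worth making explicit in your write-up: when $-1/2<\lambda<0$ and $xy>r$ in the regime $x\lesssim 2^{k_0}\sqrt r$, the ``crude'' bound $r^{-\lambda-1/2}$ is not literally an upper bound for $r^{-1/2}(xy)^{-\lambda}$; rather one uses $xy\le C(2^{k_0})^2 r$ to get $(xy)^{-\lambda}\le C(2^{k_0})^{-2\lambda}r^{-\lambda}$, producing an additional harmless polynomial factor. This is implicit in your ``$xy/r$ remains bounded'' remark but deserves a line.
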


    \begin{proof}
        Let $r,x \in (0,\infty)$. We consider different cases.\\

        Suppose that $x \leq \sqrt{r}$. Then, $I_0(x,r)=[0,x+\sqrt{r}]$ and
        $$\mu_\lambda(I_0(x,r))
            =\dfrac{(x+\sqrt{r})^{2\lambda+1}}{2\lambda+1}
            \leq C r^{\lambda+1/2}.$$
        Since $x^2+y^2+2xys=(x-y)^2+2xy(1+s) \geq 0$, $y \in (0,\infty)$ and $s \in (-1,1)$, from \eqref{2.4}
        we deduce that
        \begin{align}\label{2.5}
            W_r^\lambda(x,y)
                \leq & \frac{C}{r^{\lambda+1/2}} \left( 1 + \left(\frac{xy}{r}\right)^2  \right)
                \leq \frac{C}{r^{\lambda+1/2}} \left( 1 + \left(\frac{x(x+\sqrt{r})}{r}\right)^2  \right)
                \leq \frac{C}{r^{\lambda+1/2}} \nonumber \\
                \leq & \frac{C}{\mu_\lambda(I_0(x,r))},
                \quad y \in I_0(x,r).
        \end{align}

        Assume now that $x>\sqrt{r}$. Then, $I_0(x,r)=[x-\sqrt{r},x+\sqrt{r}]$ and
        $$\mu_\lambda(I_0(x,r))
            =\dfrac{1}{2\lambda+1} \left( (x+\sqrt{r})^{2\lambda+1} - (x-\sqrt{r})^{2\lambda+1} \right).$$
        The mean value theorem leads to $\mu_\lambda(I_0(x,r)) = 2 \sqrt{r}u^{2\lambda}$, for a certain
        $u \in (x-\sqrt{r},x+\sqrt{r})$. If $\lambda \geq 0$, it follows that
        $\mu_\lambda(I_0(x,r)) \leq 2 \sqrt{r}(x+\sqrt{r})^{2\lambda}$. On the other hand, if $-1/2<\lambda<0$, we distinguish
        two cases.
        \begin{itemize}
            \item If $x \in (\sqrt{r},3\sqrt{r})$, then
            $$\mu_\lambda(I_0(x,r))
                \leq \int_0^{x+\sqrt{r}} y^{2\lambda} dy
                \leq C (x+\sqrt{r})^{2\lambda+1}
                \leq C \sqrt{r}(x+\sqrt{r})^{2\lambda}.$$
            \item If $x \geq 3\sqrt{r}$, then
            $$\mu_\lambda(I_0(x,r))
                \leq C \sqrt{r}(x-\sqrt{r})^{2\lambda}
                \leq C \sqrt{r}\left(\frac{x+\sqrt{r}}{2}\right)^{2\lambda}.$$
        \end{itemize}
        Hence, we conclude that
        $\mu_\lambda(I_0(x,r))
            \leq C \sqrt{r}x^{2\lambda}
            \leq C x^{2\lambda+1}$
        in either case. By taking in mind Lemma~\ref{Lem2.1} in order to estimate $W_r^\lambda(x,y)$ we distinguish three regions.
        Firstly, by \eqref{2.1} it follows that
        $$W_r^\lambda(x,y)
            \leq C x^{-2\lambda-1}
            \leq \frac{C}{\mu_\lambda(I_0(x,r))},
            \quad 0 < y \leq x/2,$$
        and from \eqref{2.3} we deduce that
        $$W_r^\lambda(x,y)
            \leq C y^{-2\lambda-1}
            \leq C x^{-2\lambda-1}
            \leq \frac{C}{\mu_\lambda(I_0(x,r))},
            \quad 2x \leq y .$$
        Moreover, \eqref{2.2} implies that
        $$W_r^\lambda(x,y)
            \leq C \left( x^{-2 \lambda-1} + \frac{x^{-2 \lambda}}{\sqrt{r}} \right)
            \leq \frac{C}{\mu_\lambda(I_0(x,r))},
            \quad x/2 < y < 2x.$$
        We obtain that
        \begin{equation}\label{2.6}
            W_r^\lambda(x,y)
                \leq \frac{C}{\mu_\lambda(I_0(x,r))},
                \quad y \in (0,\infty).
        \end{equation}

        Suppose now that $k \in \mathbb{N}\setminus\{0\}$. We define
        $C_k(x,r)=\{y \in (0,\infty) : 2^{k-1} \sqrt{r} < |x-y| \leq 2^k \sqrt{r}\}$.
        It is clear that $C_k(x,r) \subset I_k(x,r)$.\\

        Assume that $x \leq 2^k \sqrt{r}$. Then, $I_k(x,r)=[0,x+2^k\sqrt{r}]$ and
        $\mu_\lambda(I_k(x,r)) \leq C (2^k\sqrt{r})^{2\lambda+1}.$
        According to \eqref{2.4}, since $x^2+y^2+2xys=(x-y)^2+2xy(1+s)$, $y \in (0,\infty)$ and $s \in (-1,1)$,
        we have that
        \begin{align}\label{2.7}
            W_r^\lambda(x,y)
                \leq & C \frac{e^{-c2^{2k}}}{r^{\lambda+1/2}} \left( 1 + \left(\frac{x(x+2^k\sqrt{r})}{r} \right)^2 \right)
                \leq   C \frac{2^{4k}e^{-c2^{2k}}}{r^{\lambda+1/2}}
                \leq  C\frac{e^{-c2^{2k}}}{\mu_\lambda(I_k(x,r))},
                \quad y \in C_k(x,r).
        \end{align}

        We take now $x>2^k \sqrt{r}$. Then, $I_k(x,r)=[x-2^k\sqrt{r},x+2^k\sqrt{r}]$ and by proceeding
        as above we get
        $\mu_\lambda(I_k(x,r))  \leq C 2^{k}\sqrt{r} x^{2\lambda}  \leq C x^{2\lambda+1}.$
        We distinguish again three cases. If $0<y \leq x/2$ and $y \in C_k(x,r)$
        we have that $2^{k-1}\sqrt{r} \leq x \leq 2^{k+1}\sqrt{r}$.
        Then, \eqref{2.1} implies that
        $$W_r^\lambda(x,y)
            \leq  C\frac{e^{-c2^{2k}}}{\mu_\lambda(I_k(x,r))},
            \quad 0<y \leq x/2.$$
        Also, from \eqref{2.3} we deduce
        $$W_r^\lambda(x,y)
            \leq  C\frac{e^{-c2^{2k}}}{\mu_\lambda(I_k(x,r))},
            \quad 2x \leq y.$$
        Finally, by \eqref{2.2} if follows that
        $$W_r^\lambda(x,y)
            \leq  Ce^{-c2^{2k}} \left( x^{-2\lambda-1} + \frac{x^{-2\lambda}}{\sqrt{r}} \right)
            \leq  C\frac{e^{-c2^{2k}}}{\mu_\lambda(I_k(x,r))},
            \quad x/2<y<2x \text{ and } y \in C_k(x,r).$$
        Hence, we get
        \begin{equation}\label{2.8}
            W_r^\lambda(x,y)
                \leq  C\frac{e^{-c2^{2k}}}{\mu_\lambda(I_k(x,r))},
                \quad y \in C_k(x,r).
        \end{equation}
        By combining \eqref{2.5}, \eqref{2.6}, \eqref{2.7} and \eqref{2.8} we obtain
        \begin{align*}
            W_r^\lambda(x,y)
                = & W_r^\lambda(x,y) \chi_{I_0(x,r)}(y)  + \sum_{k=1}^\infty W_r^\lambda(x,y) \chi_{C_k(x,r)}(y)
                \leq  C \left( \frac{\chi_{I_0(x,r)}(y)}{\mu_\lambda(I_0(x,r))}
                    +  \sum_{k=1}^\infty \frac{e^{-c2^{2k}}\chi_{C_k(x,r)}(y)}{\mu_\lambda(I_k(x,r))}   \right) \\
                \leq & C \sum_{k=0}^\infty \frac{e^{-c2^{2k}}}{\mu_\lambda(I_k(x,r))} \chi_{I_k(x,r)}(y),
                \quad y \in (0,\infty).
        \end{align*}
    \end{proof}

    \subsection{}\label{subsec:Th1.1} \emph{Proof of Theorem~\ref{Th1.1}}\\

    According to Proposition~\ref{Prop2.1} we have that
    \begin{align*}
        |\W_{r(t)}^\lambda(f)(x)|
            \leq & \int_{(0,\infty)^n} \prod_{j=1}^n W_{r_j(t)}^{\lambda_j}(x_j,y_j) |f(y)| d\mu_\lambda(y) \\
            \leq & K \sum_{k \in \mathbb{N}^n} \prod_{j=1}^n e^{-c2^{2k_j}}
                    \frac{1}{\mu_\lambda(R_k(x,r(t)))} \int_{R_k(x,r(t))} |f(y)| d\mu_\lambda(y),
            \quad x \in (0,\infty)^n \text{ and } t>0,
    \end{align*}
    where $R_k(x,r(t))=\overset{n}{\underset{j=1}{\prod}} I_{k_j}(x_j,r_j(t))$ and $K>0$.\\

    Then, it follows that
    \begin{align}\label{2.9}
        |\W_{r,*}^\lambda(f)(x)|
            \leq & K \sum_{k \in \mathbb{N}^n} \left(\prod_{j=1}^n e^{-c2^{2k_j}} \right) \mathcal{M}_{r,k}^\lambda(f)(x),
            \quad x \in (0,\infty)^n,
    \end{align}
    where $\mathcal{M}_{r,k}^\lambda$ represents the maximal function defined by
    $$\mathcal{M}_{r,k}^\lambda(f)(x)
        = \sup_{t>0} \frac{1}{\mu_\lambda(R_k(x,r(t)))} \int_{R_k(x,r(t))} |f(y)| d\mu_\lambda(y),
        \quad x \in (0,\infty)^n.$$

    By \cite[Theorem 1]{CC1}, for every $k \in \mathbb{N}^n$ and $\gamma>0$, we get
    \begin{equation}\label{2.10}
        \mu_\lambda\left( \{x\in (0,\infty)^n : \mathcal{M}_{r,k}^\lambda(f)(x) > \gamma \} \right)
            \leq \frac{6^n n!}{\gamma} \|f\|_{L^1((0,\infty)^n,d\mu_\lambda)},
            \quad f \in L^1((0,\infty)^n,d\mu_\lambda).
    \end{equation}

    Since
    $$\sum_{k \in \mathbb{N}^n} \prod_{j=1}^n e^{-\alpha 2^{2k_j}}
        = \left( \sum_{m=0}^\infty e^{-\alpha 2^{2m}} \right)^n< \infty,
        \quad \text{when } \alpha>0,$$
    by defining
    $$Q_k
        = \left( 2 K \sum_{\ell \in \mathbb{N}^n} \prod_{j=1}^n e^{-c 2^{2\ell_j-1}}  \right)^{-1} \prod_{j=1}^n e^{c  2^{2k_j-1}},
        \quad k \in \mathbb{N}^n,$$
    we have that
    \begin{align*}
        \left\{ x\in (0,\infty)^n : |\W_{r,*}^\lambda(f)(x)| > \gamma  \right\}
            \subset \bigcup_{k \in \mathbb{N}^n}  \left\{ x\in (0,\infty)^n : \mathcal{M}_{r,k}^\lambda(f)(x) > \gamma Q_k  \right\}.
    \end{align*}
    Hence, from \eqref{2.10} we deduce
    \begin{align*}
        & \mu_\lambda\left( \{x\in (0,\infty)^n : |\W_{r,*}^\lambda(f)(x)| > \gamma \} \right)
            \leq  \sum_{k \in \mathbb{N}^n} \mu_\lambda\left( \left\{ x\in (0,\infty)^n : \mathcal{M}_{r,k}^\lambda(f)(x) > \gamma Q_k  \right\}  \right) \\
        & \qquad \qquad    \leq  2 K \frac{6^n n!}{\gamma}
                    \left(\sum_{k \in \mathbb{N}^n} \prod_{j=1}^n e^{-c 2^{2k_j-1}}\right)
                    \left( \sum_{\ell \in \mathbb{N}^n} \prod_{j=1}^n e^{-c 2^{2\ell_j-1}} \right)
                    \|f\|_{L^1((0,\infty)^n,d\mu_\lambda)},
            \quad \gamma >0.
    \end{align*}
    Thus we prove that $\W_{r,*}^\lambda$ is bounded from $L^1((0,\infty)^n,d\mu_\lambda)$ into $L^{1,\infty}((0,\infty)^n,d\mu_\lambda)$.\\

    According to \eqref{2.9} it is clear that $\W_{r,*}^\lambda$ is bounded from $L^\infty((0,\infty)^n,d\mu_\lambda)$ into $L^\infty((0,\infty)^n,d\mu_\lambda)$.
    Then, by interpolating we conclude that $\W_{r,*}^\lambda$ is bounded from $L^p((0,\infty)^n,d\mu_\lambda)$ into itself, for every $1<p<\infty$.
    \begin{flushright}
        \qed
    \end{flushright}

    \subsection{}\label{subsec:Cor1.2}\emph{Proof of Corollary~\ref{Cor1.2}}\\

    In order to show this theorem it is sufficient to see that, for every $f \in C_c^\infty((0,\infty)^n)$, the space of
    smooth functions with compact support on $(0,\infty)^n$,
    $$\lim_{t \to 0^+} \W_{r(t)}^\lambda(f)(x)
        = f(x),
        \quad x \in (0,\infty)^n.$$
    Let $f \in C_c^\infty((0,\infty)^n)$. The Hankel transform $h_\lambda(f)$ of $f$ is defined by
    $$h_\lambda(f)(x)
        = \int_{(0,\infty)^n} \prod_{j=1}^n (x_jy_j)^{-\lambda_j+1/2} J_{\lambda_j-1/2}(x_jy_j) f(y)d\mu_\lambda(y),
        \quad x \in (0,\infty)^n.$$
    According to \eqref{0.1} we deduce that
    $$\W_{r(t)}^\lambda(f)(x)
        = h_\lambda \left( \prod_{j=1}^n e^{-y_j^2r_j(t)} h_\lambda(f)(y) \right)(x),
        \quad x \in (0,\infty)^n.$$
    By using the dominated convergence theorem we conclude that
    $$\lim_{t \to 0^+} \W_{r(t)}^\lambda(f)(x)
        = h_\lambda(h_\lambda(f))(x),
        \quad x \in (0,\infty)^n,$$
    and the proof finishes because $h_\lambda^{-1}=h_\lambda$ in $L^2((0,\infty)^n,d\mu_\lambda)$
    (see \cite[p. 125]{BS}).
    \begin{flushright}
        \qed
    \end{flushright}


\end{document}